\newtheorem{definition}{Definition}[section]
\newtheorem{lemma}[definition]{Lemma}
\newtheorem{falselemma}[definition]{False lemma}
\newlength{\taille} \makeatletter
\def\qed{%
  \ifmmode\vrule width .5\baselineskip height 0pt depth .5\baselineskip%
  \else{%
    \unskip\nobreak\hfil%
    \setlength{\taille}{\f@size\p@}%
    \penalty50\hskip1em\null\nobreak\hfil\vrule width .5\taille height
    0pt depth .5\taille
    \parfillskip=0pt\finalhyphendemerits=0\endgraf}%
  \fi} \makeatother
\newlength{\taillepreuve}
\newenvironment{contreexemple}{%
  \setbox123=\hbox{Counter-example:}%
  \taillepreuve=\wd123%

  \vskip-\lastskip\nobreak\medskip\par\noindent\box123\list{}{\leftmargin
    .5\taillepreuve}\parindent=1em\item} {\qed\endlist\bigskip}
\DeclareMathOperator{\rank}{rank}
\author{Laurent Lyaudet}
\title{Erratum to ``On Operations and Linear Extensions of Well Partially Ordered Sets''}
\affiliation{\url{https://lyaudet.eu/laurent/}}
\keywords{partial order}
\begin{document}
%\publicationdetails{VOL}{2017}{ISS}{NUM}{SUBM}
\maketitle
\begin{abstract}
In this article, we give a counter-example to Lemma 12 of the article
``On Operations and Linear Extensions of Well Partially Ordered Sets''
by Maciej Malicki and Aleksander Rutkowski.
\end{abstract}

Current version : 2019/04/28

\section{Introduction}
\label{section:introduction}

In this article, we give a counter-example to Lemma 12 of the article
``On Operations and Linear Extensions of Well Partially Ordered Sets''
by Maciej Malicki and Aleksander Rutkowski (\cite{DBLP:journals/order/MalickiR04}).

\section{Definitions and notations}
\label{section:definitions_and_notations}

\begin{definition}[Rank function]
Each well-founded poset \begin{math}P\end{math} 
admits an ordinal valued rank function \begin{math}\rank_{P}\end{math}
defined inductively on its elements:
\begin{math}\rank_P(a) = sup_{x <_P a}(\rank_P(x) + 1)\end{math}
\end{definition}

Let \begin{math}\mathcal{P} = \{ P_t : t \in T\}\end{math} be an ordered family of ordered sets,
i.e. both \begin{math}P_t\end{math}'s and \begin{math}T\end{math} are partially ordered
(by \begin{math}\leq_{t}\end{math} and \begin{math}\leq_{T}\end{math} respectively).
With no loss of generality, elements of \begin{math}\mathcal{P}\end{math} can be assumed to be pairwise disjoint.
Let, for \begin{math}a \in \bigcup_{t \in T} P_t\end{math}, 
\begin{math}f(a)\end{math} be that unique \begin{math}t\end{math}
such that \begin{math}a \in P_t\end{math}.

Now, assume all elements of \begin{math}\mathcal{P}\end{math} to be well-founded
and call, for \begin{math}a \in \bigcup_{t \in T} P_t\end{math},
the \emph{primitive rank} of \begin{math}a\end{math} an ordinal \begin{math}g(a) = \rank_{P_{f(a)}}(a)\end{math}.
Define the following ranked order \begin{math}<_R\end{math} on \begin{math}\bigcup_{t \in T} P_t\end{math}:
\begin{math}a <_R b\end{math} if 
\begin{itemize}
\item either \begin{math}f(a) = f(b)\end{math} and \begin{math}a <_{f(a)} b\end{math},
\item or \begin{math}f(a) < f(b)\end{math} and \begin{math}g(a) \leq g(b)\end{math}.
\end{itemize}
Call the union with that order the \emph{ranked sum} and denote it \begin{math}\mathcal{RP}\end{math}.
Observe that \begin{math}a \leq_R b\end{math} implies \begin{math}f(a) \leq_T f(b)\end{math}.

\section{A counter example to Lemma 12}
\label{section:A_counter_example_to_Lemma_12}

\begin{falselemma}[Lemma 12]
Let both \begin{math}T\end{math} and all the components \begin{math}P_t\end{math}
of \begin{math}\mathcal{RP}\end{math} be well-founded
(hence \begin{math}P = \mathcal{RP}\end{math} is well-founded too).
Then for each \begin{math}a \in \bigcup_{t \in T} P_t\end{math},
\begin{math}\rank_P(a) \leq \rank_T(f(a)) + \rank_{f(a)}(a)\end{math}.
\end{falselemma}

\begin{contreexemple}
It is easy to construct an order \begin{math}\mathcal{RP}\end{math}
with an element \begin{math}a\end{math} such that
\begin{math}\rank_P(a) = \rank_T(f(a)) + \rank_{f(a)}(a) + 1\end{math}.
Indeed consider \begin{math}T = \{0, 1\}\end{math}, and \begin{math}P_0 = P_1 = \omega + 1\end{math}
(\begin{math}\omega\end{math} is the first infinite ordinal).
Let \begin{math}a\end{math} be the maximum of \begin{math}P_1\end{math}, 
and \begin{math}b\end{math} be the maximum of \begin{math}P_0\end{math}.
Then \begin{math}\rank_{\mathcal{RP}}(b) = \rank_{P_0}(b) \end{math},
hence \begin{math}\rank_{\mathcal{RP}}(a) = \rank_{\mathcal{RP}}(b) + 1 = \omega + 1 
> \rank_T(f(a)) + \rank_{f(a)}(a) = \rank_T(P_1) + \rank_{P_1}(a) = 1 + \omega = \omega\end{math}
(ordinal sum is not commutative and \begin{math}1 + \omega \neq \omega + 1\end{math}).

The problem in the proof is in the line 
\begin{math}\rank_T(f(b)) + \rank_{f(b)}(b) + 1 \leq \rank_T(f(a)) + \rank_{f(a)}(a)\end{math}.
It should be corrected to 
\begin{math}\rank_T(f(b)) + 1 + \rank_{f(b)}(b) \leq \rank_T(f(a)) + \rank_{f(a)}(a)\end{math},
but then the proof by transfinite induction fails.

You cannot correct the lemma by switching both ranks, i.e.
\begin{math}\rank_P(a) \leq \rank_{f(a)}(a) + \rank_T(f(a))\end{math}.
Indeed then \begin{math}T = \omega + 1\end{math}, and \begin{math}P_0 = P_1 = ... = P_{\omega} = \{0, 1\}\end{math}
is a counter-example.
\end{contreexemple}

\begin{lemma}
For any ordinal \begin{math}\alpha\end{math},
there is an order \begin{math}\mathcal{RP}\end{math}
with an element \begin{math}a\end{math} such that
\begin{math}\rank_P(a) = \rank_T(f(a)) + \rank_{f(a)}(a) + \alpha\end{math}.
\end{lemma}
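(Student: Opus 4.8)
The plan is to exhibit, for each ordinal $\alpha$, a single explicit ranked sum realising the stated gap. Set $\mu := \omega^{\alpha+1}$ (what matters is only that $\mu > \alpha$, that $\alpha + \mu = \mu$, and that $\mu \oplus \alpha = \mu + \alpha$, where $\oplus$ is the Hessenberg natural sum; $\omega^{\alpha+1}$ has all three properties because $\alpha < \omega^{\alpha+1}$). Let $Q := \mu + 1$, let $T := \alpha + 1$ (well-ordered, hence well-founded), and for every $t \in T$ let $P_t$ be a disjoint copy of $Q$; form $\mathcal{RP}$, which is well-founded since $T$ and all $P_t$ are. Writing $(t,\xi)$ for the copy of the ordinal $\xi \le \mu$ sitting in $P_t$, one has $g((t,\xi)) = \xi$, and $(s,\xi) <_R (t,\eta)$ exactly when $s = t$ and $\xi < \eta$, or $s <_T t$ and $\xi \le \eta$. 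The witness will be $a := (\alpha,\mu)$, the maximum of the maximal component; there $\rank_{f(a)}(a) = \mu$ and $\rank_T(f(a)) = \alpha$, so $\rank_T(f(a)) + \rank_{f(a)}(a) = \alpha + \mu = \mu$, and the statement reduces to proving $\rank_{\mathcal{RP}}(a) = \mu + \alpha$.

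First I would prove the lower bound $\rank_{\mathcal{RP}}((t,v)) \ge \rank_Q(v) + \rank_T(t)$ for every element, by transfinite induction on $\rank_T(t)$. The mechanism is that for $s <_T t$ the primitive ranks of $(s,v)$ and $(t,v)$ coincide, so $(s,v) <_R (t,v)$; thus a component of $T$-rank $\tau'$ lying below $t$ contributes a predecessor of $\mathcal{RP}$-rank $\ge \rank_Q(v) + \tau'$, and pushing $\tau'$ through the successor and (using continuity of $\xi \mapsto \rank_Q(v) + \xi$) limit stages yields $\rank_Q(v) + \rank_T(t)$. At $a$ this gives $\rank_{\mathcal{RP}}(a) \ge \mu + \alpha$.

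The harder half is the matching upper bound, and the subtlety here is precisely the one underlying the counter-example to Lemma 12: the naive bound $\rank_{\mathcal{RP}}((t,v)) \le \rank_Q(v) + \rank_T(t)$ is false (the $\alpha = 1$ instance of this very construction already violates it). Instead I would prove $\rank_{\mathcal{RP}}((t,v)) \le \rank_Q(v) \oplus \rank_T(t)$ by transfinite induction. In the recursion $\rank_{\mathcal{RP}}((t,v)) = \sup\{\rank_{\mathcal{RP}}(x) + 1 : x <_R (t,v)\}$ each predecessor $x$ is either $(t,u)$ with $\rank_Q(u) < \rank_Q(v)$ or $(s,w)$ with $\rank_T(s) < \rank_T(t)$ and $\rank_Q(w) \le \rank_Q(v)$; since $\beta \oplus 1 = \beta + 1$ and $\oplus$ is commutative, associative and monotone, the trailing ``$+1$'' can be moved inside and absorbed ($\rank_Q(u) + 1 \le \rank_Q(v)$, resp. $\rank_T(s) + 1 \le \rank_T(t)$), so every term stays $\le \rank_Q(v) \oplus \rank_T(t)$ and hence so does the supremum. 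I expect this inductive step — specifically, recognising that one must relax the bound from ordinary addition to the natural sum to make the induction go through, while keeping it sharp at the extremal element — to be the main obstacle.

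Finally I would combine the two estimates at $a = (\alpha,\mu)$. Because $\alpha < \mu = \omega^{\alpha+1}$, the exponent $\alpha+1$ in the Cantor normal form of $\mu$ strictly exceeds every exponent occurring in $\alpha$, so $\mu \oplus \alpha = \mu + \alpha$; hence $\mu + \alpha \le \rank_{\mathcal{RP}}(a) \le \mu \oplus \alpha = \mu + \alpha$, that is, $\rank_{\mathcal{RP}}(a) = \mu + \alpha$. With $\rank_T(f(a)) + \rank_{f(a)}(a) = \alpha + \mu = \mu$ this gives $\rank_P(a) = \mu + \alpha = \bigl(\rank_T(f(a)) + \rank_{f(a)}(a)\bigr) + \alpha$, as wanted. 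For finite $\alpha$ the construction collapses to $T = \{0 < 1 < \dots < \alpha\}$ with every $P_t = \omega + 1$, which for $\alpha = 1$ is exactly the counter-example to Lemma 12 displayed above.
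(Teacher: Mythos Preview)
Your construction is the same as the paper's: take $T=\alpha+1$ and let every $P_t$ be a copy of $\beta+1$ for an ordinal $\beta$ with $\alpha+\beta=\beta$; the paper selects the least such $\beta$ while you take $\mu=\omega^{\alpha+1}$, but either choice works and both give $\rank_T(f(a))+\rank_{f(a)}(a)=\alpha+\beta=\beta$ at the top element $a$. Where you go beyond the paper is in actually \emph{justifying} the equality $\rank_{\mathcal{RP}}(a)=\beta+\alpha$. The paper simply asserts ``hence $\rank_{\mathcal{RP}}(a)=\rank_{\mathcal{RP}}(b)+\alpha$'' with no argument for either inequality, whereas you supply a genuine two-sided estimate: the lower bound $\rank_{\mathcal{RP}}((t,v))\ge v+t$ by induction on $t$, and the upper bound $\rank_{\mathcal{RP}}((t,v))\le v\oplus t$ via the Hessenberg natural sum, together with the observation that your choice of $\mu$ forces $\mu\oplus\alpha=\mu+\alpha$ so that the two bounds meet at $a$. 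This extra care is warranted, since the lemma asserts an equality and the upper bound is precisely the direction that the original Lemma~12 got wrong; your natural-sum inequality is in effect the corrected form of that lemma for this family of ranked sums. So: same example, but your argument is strictly more complete.
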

\begin{proof}
Consider \begin{math}T = \alpha + 1\end{math}, and \begin{math}P_0 = P_1 = ... = P_{\alpha} = \beta + 1\end{math},
where \begin{math}\beta\end{math} is the first ordinal such that
\begin{math}\alpha + \beta = \beta\end{math}.
Let \begin{math}a\end{math} be the maximum of \begin{math}P_{\alpha}\end{math}, 
and \begin{math}b\end{math} be the maximum of \begin{math}P_0\end{math}.
Then \begin{math}\rank_{\mathcal{RP}}(b) = \rank_{P_0}(b) \end{math},
hence \begin{math}\rank_{\mathcal{RP}}(a) = \rank_{\mathcal{RP}}(b) + \alpha = \beta + \alpha 
> \rank_T(f(a)) + \rank_{f(a)}(a) = \rank_T(P_{\alpha}) + \rank_{P_{\alpha}}(a) = \alpha + \beta = \beta\end{math}.
\end{proof}

\begin{lemma}
For any ordinal \begin{math}\alpha\end{math},
there is an order \begin{math}\mathcal{RP}\end{math}
with an element \begin{math}a\end{math} such that
\begin{math}\rank_P(a) = \rank_{f(a)}(a) + \rank_T(f(a)) + \alpha\end{math}.
\end{lemma}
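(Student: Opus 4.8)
The plan is to mirror the construction used for the preceding lemma, interchanging the roles of the index set and the fibres. Assume $\alpha\geq 1$ (the case $\alpha=0$ is trivial, and covered by the same recipe). Let $\gamma$ be the least ordinal with $\alpha+\gamma=\gamma$; then $\gamma$ is additively indecomposable (so $\zeta,\xi<\gamma$ implies $\zeta\oplus\xi<\gamma$, where $\oplus$ denotes the natural/Hessenberg sum), and in particular $\alpha<\gamma$. Take $T=\gamma+1$ and $P_t=\alpha+1$ for every $t\leq\gamma$, and let $a=\max P_\gamma$, so $f(a)=\gamma$, $\rank_T(f(a))=\gamma$, $\rank_{f(a)}(a)=\alpha$. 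Everything then reduces to the single equality $\rank_P(a)=\gamma+\alpha$, because once it holds we get $\rank_{f(a)}(a)+\rank_T(f(a))+\alpha=\alpha+\gamma+\alpha=(\alpha+\gamma)+\alpha=\gamma+\alpha=\rank_P(a)$ by the defining property of $\gamma$. (As before, $\rank_P(\max P_0)=\rank_{P_0}(\max P_0)=\alpha$ since $0$ is $T$-minimal; this is the analogue of the base point used in the previous proof but is not strictly needed.)

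I would prove $\rank_P(a)=\gamma+\alpha$ by the two inequalities. Writing elements of $P=\mathcal{RP}$ as pairs $(t,j)$ with $t\leq\gamma$, $j\leq\alpha$ (so $g(t,j)=j$, and $(t,j)<_R(t',j')$ iff $t=t'$, $j<j'$, or $t<_T t'$, $j\leq j'$): for the lower bound, the $(t,0)$ with $t<_T\gamma$ form a $<_R$-chain of type $\gamma$ below $(\gamma,0)$, so $\rank_P(\gamma,0)\geq\gamma$, and then the chain $(\gamma,0)<_R(\gamma,1)<_R\cdots$ of type $\alpha+1$ gives $\rank_P(\gamma,j)\geq\gamma+j$ by induction on $j$, hence $\rank_P(a)\geq\gamma+\alpha$. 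For the upper bound I would show $\rank_P(t,j)\leq\rank_T(t)\oplus j$ by transfinite induction on $\rank_T(t)$ (with an inner induction on $j$): unwinding $\rank_P(t,j)=\sup\bigl(\{\rank_P(t,l)+1:l<j\}\cup\{\rank_P(t',l)+1:t'<_T t,\ l\leq j\}\bigr)$ and using $(x\oplus l)+1=x\oplus(l{+}1)$ together with strict monotonicity of $\oplus$, each term on the right is $\leq\rank_T(t)\oplus j$. Specializing to $t=\gamma$, $j=\alpha$, and noting $\gamma\oplus\alpha=\gamma+\alpha$ (as $\gamma=\omega^\eta$ is additively indecomposable and $\alpha<\gamma$), gives $\rank_P(a)\leq\gamma+\alpha$.

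The main obstacle is exactly the pathology responsible for the failure of the false Lemma~12 and flagged in the counter-example above: ordinary ordinal addition is not left-continuous, so the naive bound $\rank_P(t,j)\leq\rank_T(t)+j$ is false at limit stages (already $\rank_P(\omega,1)=\omega+1\neq 1+\omega$). Carrying the upper-bound induction through the natural sum instead repairs this — $\oplus$ behaves well under the relevant suprema, $\gamma$ is closed under it, and it collapses back to ordinary addition precisely at the distinguished pair $(\gamma,\alpha)$ because $\gamma$ is additively indecomposable; the rest is routine transfinite bookkeeping.
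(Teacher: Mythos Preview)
The paper states this lemma without proof, presumably because it is meant to follow by the same construction as the preceding lemma with the roles of $T$ and the fibres interchanged; your proposal does exactly that, and it is correct.

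A brief comparison is still worth making. Your choice $T=\gamma+1$, $P_t=\alpha+1$ (with $\gamma$ the least ordinal satisfying $\alpha+\gamma=\gamma$) is the mirror of the paper's $T=\alpha+1$, $P_t=\beta+1$; in both cases all fibres coincide, so $\mathcal{RP}$ is nothing but the product order on two ordinals, whose rank at $(t,j)$ is the Hessenberg sum $t\oplus j$. Where the paper, for the companion lemma, simply asserts $\rank_{\mathcal{RP}}(a)=\rank_{\mathcal{RP}}(b)+\alpha=\beta+\alpha$ in one line, you actually justify the analogous equality: the lower bound via an explicit chain, and the upper bound by the transfinite induction $\rank_P(t,j)\leq \rank_T(t)\oplus j$, exploiting commutativity and strict monotonicity of $\oplus$ together with $(x\oplus l)+1=(x{+}1)\oplus l=x\oplus(l{+}1)$. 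Your observation that $\gamma$ is a power of $\omega$ (hence additively indecomposable with $\alpha<\gamma$, so $\gamma\oplus\alpha=\gamma+\alpha$) is what lets the natural sum collapse back to the ordinary sum at the one point where it matters. So your argument is the intended dual, and in fact supplies the details the paper leaves implicit.
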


\section{Conclusion}

We sent an email to one of the authors on 2019/02/24 but, unfortunately, we never had an answer.
We hope this erratum may be useful to the scientific community.

\acknowledgements
\label{section:acknowledgements}

We thank God: Father, Son, and Holy Spirit. We thank Maria.
They help us through our difficulties in life.

\nocite{*}
\bibliographystyle{abbrvnat}
% use the following instead if you encounter problems 
%\bibliographystyle{alpha}
\bibliography{LL2019ErratumMalickiRutkowski2003}

\begin{thebibliography}{1}
\providecommand{\natexlab}[1]{#1}
\providecommand{\url}[1]{\texttt{#1}}
\expandafter\ifx\csname urlstyle\endcsname\relax
  \providecommand{\doi}[1]{doi: #1}\else
  \providecommand{\doi}{doi: \begingroup \urlstyle{rm}\Url}\fi

\bibitem[Malicki and Rutkowski(2004)]{DBLP:journals/order/MalickiR04}
M.~Malicki and A.~Rutkowski.
\newblock On operations and linear extensions of well partially ordered sets.
\newblock \emph{Order}, 21\penalty0 (1):\penalty0 7--17, 2004.
\newblock \doi{10.1007/s11083-004-2738-0}.
\newblock URL \url{https://doi.org/10.1007/s11083-004-2738-0}.

\end{thebibliography}
\label{section:bibliography}

\end{document}